\title[Removability of product sets for Sobolev functions in the plane]
{Removability of product sets for Sobolev functions in the plane}
\author{Ugo Bindini}
\author{Tapio Rajala}
\address{University of Jyväskylä \\
         Department of Mathematics and Statistics \\
         P.O. Box 35 (MaD) \\
         FI-40014 University of Jyväskylä \\
         Finland}
\email{ugo.u.bindini@jyu.fi}         
\email{tapio.m.rajala@jyu.fi}
\thanks{The authors acknowledge the support from the Academy of Finland, grant no. 314789.}
\subjclass[2000]{Primary 46E35.}
\keywords{}
\date{\today}
\begin{document}

\begin{abstract}
We study conditions on closed sets $C,F \subset \R$ making the product $C \times F$ removable or non-removable for $W^{1,p}$. The main results show that the Hausdorff-dimension of the smaller dimensional component $C$ determines a critical exponent above which the product is removable for some positive measure sets $F$, but below which the product is not removable for another collection of positive measure totally disconnected sets $F$. Moreover, if the set $C$ is Ahlfors-regular, the above removability holds for any totally disconnected $F$.
\end{abstract}


\maketitle


\section{Introduction}

In this paper we study the Sobolev-removability of closed subsets of the Euclidean plane. 
The Sobolev space $W^{1,p}(\Omega)$, for $1 \le p \le \infty$ and a domain $\Omega \subset \R^2$, consists of $f \in L^p(\Omega)$ for which the weak first order partial derivatives $\partial_if$ are also in $L^p(\Omega)$.
A subset $E \subset \R^2$ of Lebesgue measure zero is called removable for $W^{1,p}$, or simply $p$-removable, for $1 \le p \le \infty$, if $W^{1,p}(\R^2 \setminus E) = W^{1,p}(\R^2)$ as sets. Since $E$ has Lebesgue measure zero, $E$ is removable for $W^{1,p}$ if and only if every $u \in W^{1,p}(\R^2 \setminus E)$ has an $L^p$-representative that is absolutely continuous on almost every line-segment parallel to the coordinate axis. 

Let us make some observations on $p$-removable sets. By Hölder's inequality, $p$-removable sets are also $q$-removable for every $q>p$. In particular, each $p$-removable set is $\infty$-removable. Hence, the complement of a $p$-removable set is always quasi-convex meaning that any two points in the complement can be joined by a curve in the complement whose length is comparable to the distance between the points. Since the sets $E$ we consider have Lebesgue measure zero, the quasi-convexity of the complement implies that closed $p$-removable sets are actually metrically removable, see \cite[Proposition 3.3]{KalmykovKovalevRajala2019}.

Ahlfors and Beurling \cite{AhlforsBeurling1950} studied removable sets for analytic functions with finite Dirichlet integral (see also the work of Carleson \cite{Carleson1951}). This class of sets coincides with planar $2$-removable sets. Consequently, a lot of work was done on removable sets for quasiconformal maps that are globally homeomorphisms, see for instance \cite{Bishop1994,Gehring1960,HeinonenKoskela1995,Jones1981,Kaufman1984,KaufmanWu1996,Vaisala1969,VodopyanovGoldhstein1975}. Let us point out that Sobolev-removability has also been considered for globally continuous functions, see for example \cite{Ntalampekos2019} and references therein. The version of Sobolev-removability we consider here can be characterized via condenser capacities or extremal distances \cite{AhlforsBeurling1950,AseevSyvev1974,Hedberg1974,Shlyk1990,Vaisala1962,VodopyanovGoldhstein1975,Yamamoto1982}. However, these conditions are not easy to check. Because of this, Koskela \cite{Koskela1999} and Wu \cite{Wu1998} considered Sobolev-removability in terms of different kinds of porosities that are easier to verify. Removability of porous sets for weighted Sobolev spaces \cite{FutamuraMizuta2003} and (weighted) Orlicz-Sobolev spaces \cite{Karak2015,Karak2019} has also been studied. Generalizations of the removability results in the spirit of Ahlfors and Beurling have been done for weighted Sobolev spaces, see for example \cite{DemshinShlyk1995}.

The sets $E$ whose $p$-removability we consider here are of the form $E = C \times F$ where $C,F \subset \R$ are closed. If $C$ or $F$ contains an interval of positive length, it is easy to see that the set $E$ is not $W^{1,p}$-removable for any $1\le p \le \infty$. Therefore, we may assume that both $C$ and $F$ are totally disconnected. Now, on one hand, if both $C$ and $F$ have zero Lebesgue measure, the set $E$ is automatically $W^{1,p}$-removable since almost every line segment parallel to a coordinate axis has empty intersection with $E$. On the other hand, if $C$ and $F$ both have positive Lebesgue measure, the set $E$ has positive Lebesgue measure, hence cannot be removable. We have reduced our study to the following.
\begin{problem}\label{problem}
 Let $1 \le p \le \infty$ and $C,F \subset \R$ be totally disconnected closed subsets with $C$ having zero Lebesgue measure and $F$ positive Lebesgue measure. Under what conditions on $C$ and $F$ is the set $C \times F$ removable for $W^{1,p}$?
\end{problem}

Examples of $p$-removable and non-removable product sets of the type considered in \autoref{problem} have appeared in \cite{Koskela1999}, \cite[Example 2]{Wu1998}, and \cite[Lemma 4.4]{KoskelaRajalaZhang2017}. In \cite{Koskela1999} and \cite{Wu1998} different porosity parameters of sets determined the $p$-removability. In our results the porosity type conditions have only a secondary role and the main parameter is the Hausdorff dimension of the set $C$. Some ideas of the proofs we present here were present in \cite[Lemma 4.4]{KoskelaRajalaZhang2017}, where also the dimension of $C$ was seen to affect the $p$-removability.

Our main results (\autoref{thm:nonremovability} and \autoref{thm:removability}) connect the Hausdorff dimension of $C$ with the $p$-removability of $C \times F$ in the following way. They roughly say that $C\times F$ is not $p$-removable for some $F$, but is $q$-removable for other $F$ when
\[
p < \frac{2-\dim(C)}{1-\dim(C)} < q.
\]
Thus, the dimension of $C$ is sharp for the transition between non-removable and removable examples. However, we emphasize that in our results the positive measure set $F$ needs to be thick (\autoref{thm:nonremovability}) or thin (\autoref{thm:removability}) enough.

Our results should be mainly compared to the $p$-removability results of Koskela \cite{Koskela1999}. He considered the case $C = \{0\}$ and observed in \cite[Theorem 2.2]{Koskela1999} that $ \{0\} \times F$ is not $p$-removable for $1 \le p \le 2$, when $\Haus^1(F)> 0$ and $F = [0,1] \setminus \bigcup_{i=1}^\infty I_i$ with $I_i$ pairwise disjoint open intervals with $\sum_{i=1}^\infty |I_i|^{2-p}$. The generalization of this result is done in \autoref{thm:nonremovability} below. In the other direction, Koskela proved in \cite[Theorem 2.3]{Koskela1999} that $\{0\} \times F$ is $p$-removable for $1 < p < 2$, if for almost every $x \in F$ there exist a sequence of numbers $r_i\searrow 0$ and a constant $c$ so that $B(x,r_i) \setminus F$ contains an interval of length $cr_i^{1/(2-p)}$. We generalize this result in \autoref{thm:removability}.

\begin{theorem}\label{thm:nonremovability}
Let $2 \le p < \infty$ and  $s > \frac{p-2}{p-1}$. Then for any closed subset $C \subset \R$ with $\Haus^s(C) >0$ and any set $F$ of the form $F = [0,1]\setminus \bigcup_{j=1}^\infty I_j$, where $I_j$ are open intervals satisfying
\begin{equation} \label{eq:sumintervals}
\sum_{j=1}^\infty \abs{I_j}^{1-(1-s)(p-1)} < \infty,
\end{equation}
and $\Haus^1(F)>0$, the set $C \times F$ is not $p$-removable. 
\end{theorem}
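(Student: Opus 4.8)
The plan is to exhibit one function belonging to $W^{1,p}(\R^2\setminus(C\times F))$ but admitting no representative in $W^{1,p}(\R^2)$. First some reductions: if $\Haus^1(C)>0$, then $\abs{C\times F}>0$ (as $\Haus^1(F)>0$) and $C\times F$ is non-removable by definition, so assume $\Haus^1(C)=0$; since non-removability of $C'\times F$ with $C'\subseteq C$ implies that of $C\times F$, assume also $C\subseteq[0,1]$; and writing $\beta:=1-(1-s)(p-1)$ — so that the hypothesis is exactly $\beta>0$, and $\beta\le1$ — subadditivity of $t\mapsto t^\beta$ lets us replace the $I_j$ by the components of $[0,1]\setminus F$ without spoiling \eqref{eq:sumintervals}, so the $I_j$ are pairwise disjoint and $F=[0,1]\setminus\bigcup_jI_j$. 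By Frostman's lemma fix a probability measure $\mu$ with $\operatorname{supp}\mu\subseteq C$ and $\mu([x-r,x+r])\le Ar^s$ for all $x,r$; since $s>0$ it is atomless, so $\psi(x):=\mu((-\infty,x])$ is continuous and nondecreasing, equals $0$ left of and $1$ right of $[0,1]$, has $\psi'=0$ a.e.\ (it is constant off the Lebesgue-null set $C$), and — being a continuous function whose total variation $1$ is carried by a null set — has no absolutely continuous representative on $[-1,2]$.

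The function I would use is a mollification of $\psi$ in the first variable at a scale that degenerates along $F$. With $\varphi$ a standard mollifier, $\varphi_t=t^{-1}\varphi(\cdot/t)$, and $U(x,t):=(\psi*\varphi_t)(x)$ (and $U(x,0):=\psi(x)$), set $u(x,y):=U\bigl(x,\operatorname{dist}(y,F)\bigr)$ and $w:=\zeta u$ for a cutoff $\zeta\in C_c^\infty(\R^2)$ equal to $1$ on $[-1,2]^2$; on each slab $\R\times I_j$ this is mollification at scale $\operatorname{dist}(\cdot,\partial I_j)$, vanishing at both ends, while $u(\cdot,y)=\psi$ for every $y\in F$. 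To see $w\in W^{1,p}(\R^2\setminus(C\times F))$ one uses that $0\le U\le1$, that $U$ is smooth on $\R\times(0,\infty)$ with $U(x,t)=\psi(x)$ whenever $\operatorname{dist}(x,C)>t$, and that $\abs{\partial_xU(x,t)}+\abs{\partial_tU(x,t)}\le C\,t^{-1}\mu([x-t,x+t])$: from these, $u$ is locally constant near points $(x_0,y_0)$ with $x_0\notin C$ and $\operatorname{dist}(y_0,F)<\operatorname{dist}(x_0,C)$ and locally Lipschitz near all other points of $\R^2\setminus(C\times F)$ (where $\operatorname{dist}(\cdot,F)$ is bounded below), and $\nabla u=0$ a.e.\ on $(\R\setminus C)\times F$. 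The single quantitative step is the Frostman bound $\int_\R\mu([x-t,x+t])^p\,dx\le (At^s)^{p-1}\int_\R\mu([x-t,x+t])\,dx=2A^{p-1}t^{1+s(p-1)}$ (the last equality by Fubini), giving $\int_\R\abs{\nabla U(x,t)}^p\,dx\le C\,t^{\gamma}$ with $\gamma:=s(p-1)+1-p=(s-1)(p-1)$; the hypothesis $s>\tfrac{p-2}{p-1}$ is exactly $\gamma>-1$, and $\gamma+1=\beta$, so integrating in $y$ and using $\operatorname{dist}(y,F)=\operatorname{dist}(y,\partial I_j)$ on $I_j$,
\[
\int_{\operatorname{supp}\zeta}\abs{\nabla u}^p\ \le\ C\sum_j\int_{I_j}\operatorname{dist}(y,\partial I_j)^{\gamma}\,dy+C_0\ =\ C'\sum_j\abs{I_j}^{\beta}+C_0\ <\ \infty
\]
by \eqref{eq:sumintervals}, $C_0$ absorbing the bounded contribution of $y$ near $\{0,1\}$. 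Together with $\abs{w}\le\abs{\zeta}\in L^p$ this gives $w\in W^{1,p}(\R^2\setminus(C\times F))$.

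It remains to see $w\notin W^{1,p}(\R^2)$. For a.e.\ $y\in F$ — a set of positive measure — one has $\operatorname{dist}(y,F)=0$, hence $w(\cdot,y)=\zeta(\cdot,y)\psi(\cdot)$, which equals $\psi$ on $[-1,2]$. If $w$ had a representative $\tilde w\in W^{1,p}(\R^2)$, then by the ACL property $\tilde w(\cdot,y)$ would be absolutely continuous on $\R$ and a.e.\ equal to $w(\cdot,y)$ for a.e.\ $y$; any such $y\in F$ would then provide an absolutely continuous representative of $\psi$ on $[-1,2]$, contradicting the choice of $\psi$. Hence $W^{1,p}(\R^2\setminus(C\times F))\ne W^{1,p}(\R^2)$, i.e.\ $C\times F$ is not $p$-removable.

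I expect the real difficulty to be the choice of the function rather than any single estimate: one must keep the singular profile $\psi$ exactly on the slices through $F$ (this is what defeats any $W^{1,p}(\R^2)$ extension) while, on each gap slab $\R\times I_j$, replacing it by its mollification at precisely the scale $\operatorname{dist}(\cdot,\partial I_j)$ — the vanishing of this scale at the endpoints of $I_j$ is what makes $u$ Sobolev near $C\times F$, and it is exactly this scale that makes the gradient integral collapse to a constant times $\sum_j\abs{I_j}^{1-(1-s)(p-1)}$. The only remaining input is the Frostman-type estimate $\int_\R\mu([x-t,x+t])^p\,dx\le C\,t^{1+s(p-1)}$.
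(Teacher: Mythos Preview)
Your proposal is correct and follows essentially the same approach as the paper: the paper's extension $v(x,t)=\frac{1}{2t}\int_{x-t}^{x+t}\psi$ is your $U(x,t)=\psi*\varphi_t$ with the uniform kernel $\varphi=\tfrac12\chi_{[-1,1]}$, and the key estimate $\int_\R\mu([x-t,x+t])^p\,dx\lesssim t^{1+s(p-1)}$ (via Frostman for $p-1$ factors and Fubini for the last) leading to $\sum_j|I_j|^{1-(1-s)(p-1)}$ is identical. The paper works on a large ball rather than multiplying by a cutoff, but otherwise the two arguments coincide.
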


We do not know what are the sets $C$ in \autoref{thm:nonremovability} for which $C \times F$ is not $p$-removable for every closed set $F \subset \R$ of positive Lebesgue measure. On one hand, if $C$ is a singleton, the $p$-removability depends on $F$ by the results of Koskela \cite{Koskela1999}, as we already discussed above. On the other hand, in \autoref{sec:regular} we show that if $C$ is Ahlfors $s$-regular with $0 < s < 1$, then the $p$-removability is independent of $F$. 

\begin{theorem}\label{thm:removability}
 Let $2 \le p < \infty$ and $s < \frac{p-2}{p-1}$.
 Suppose that $C \subset \R$ is a closed set with $\Haus^s(C) < \infty$ and that $F \subset \R$ is a closed set for which at $\Haus^1$-almost every point $y \in F$
 there exists $r_y > 0$ and $c_y > 0$ so that for any $0 < r < r_y$ we have
\begin{equation}\label{eq:density}
\Haus^1(B(y,r)\setminus F) \ge c_y r^{(1-s)(p-1)}.
\end{equation}
Then the set $C \times F$ is $p$-removable.
\end{theorem}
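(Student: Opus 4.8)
The plan is to establish removability through the line characterisation recalled in the introduction: since $C$ has measure zero, almost every vertical line misses $C\times F$, so one only has to show that every $u\in W^{1,p}(\R^2\setminus(C\times F))$ is absolutely continuous on almost every horizontal segment. Fix a compact interval $I\subset\R$ and a height $y_0$ belonging to the full‑measure subset of $F$ on which \eqref{eq:density} holds and which is a Fubini point for the slices of $u,\partial_1u,\partial_2u$; then $g:=u(\cdot,y_0)$ is absolutely continuous on every component of $I\setminus C$ and $g'=\partial_1u(\cdot,y_0)\in L^1(I)$. Note that, since $s\ge 0$ while $\tfrac{p-2}{p-1}\le 0$ for $p\le 2$, the hypothesis forces $p>2$, so $u$ has a continuous representative on the open set $\R^2\setminus(C\times F)$ by the Morrey embedding; I use this throughout. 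The goal is to show that $g$ extends to an absolutely continuous function on $I$.

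By the Banach--Zaretsky characterisation, a continuous function that is differentiable a.e.\ with $L^1$ derivative is absolutely continuous as soon as it satisfies Luzin's condition (N); since $g$ already has these properties off $C$, it suffices to produce, for every $\varepsilon>0$, a finite disjoint family of closed intervals $\{J_k\}$ with endpoints outside $C$, covering $C$, and with $\sum_k\operatorname{osc}(g;J_k)<\varepsilon$ — this makes $g$ continuous on $I$ and yields $|g(C)|\le\sum_k\operatorname{osc}(g;J_k)$, hence (N). Fix a small $\delta\in(0,r_{y_0})$ and, using $\Haus^s(C)<\infty$, choose such a cover with $\ell_k:=|J_k|\le\delta$ and $\sum_k\ell_k^s\le\Haus^s(C)+1=:M$. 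Put $R_k:=J_k\times(y_0-\ell_k,y_0+\ell_k)$ and $G_k:=(y_0-\ell_k,y_0+\ell_k)\setminus F$, so that $|G_k|\ge c_{y_0}\ell_k^{(1-s)(p-1)}$ by \eqref{eq:density}. For $y\in G_k$ the horizontal line at height $y$ misses $C\times F$, hence $u(\cdot,y)$ is absolutely continuous on $J_k$; for $x\in J_k\setminus C$ the vertical segment through $x$ misses $C\times F$, hence $u(x,y_0)=u(x,y)-\int_{y_0}^{y}\partial_2u(x,t)\,dt$. Averaging the resulting bound $|g(x_1)-g(x_2)|\le\operatorname{osc}(u(\cdot,y);J_k)+\sum_{i}\int_{y_0-\ell_k}^{y_0+\ell_k}|\partial_2u(x_i,t)|\,dt$ over $y\in G_k$, using $\operatorname{osc}(u(\cdot,y);J_k)\le\int_{J_k}|\partial_1u(\xi,y)|\,d\xi$, Hölder's inequality and the lower bound on $|G_k|$, one gets
\[
\operatorname{osc}(g;J_k)\le c_{y_0}^{-1/p}\,\ell_k^{\,s/p'}\,\|\partial_1u\|_{L^p(R_k)}+2\sup_{x\in J_k\setminus C}\int_{y_0-\ell_k}^{y_0+\ell_k}|\partial_2u(x,t)|\,dt ,
\]
the first term resting on the arithmetic identity $\tfrac1{p'}-\tfrac{(1-s)(p-1)}{p}=\tfrac{s}{p'}$, which holds precisely because $p>2$.

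The horizontal part sums harmlessly. Since the $R_k$ are pairwise disjoint, applying Hölder in $k$ with exponents $p',p$,
\[
\sum_k c_{y_0}^{-1/p}\ell_k^{\,s/p'}\|\partial_1u\|_{L^p(R_k)}\le c_{y_0}^{-1/p}\Big(\sum_k\ell_k^s\Big)^{1/p'}\|\partial_1u\|_{L^p(\bigcup_kR_k)}\le c_{y_0}^{-1/p}M^{1/p'}\|\partial_1u\|_{L^p(\bigcup_kR_k)},
\]
and since $|\bigcup_kR_k|=2\sum_k\ell_k^2\le 2M\delta^{2-s}\to0$, absolute continuity of the integral sends this to $0$ with $\delta$.

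The remaining vertical contribution $\sum_k\sup_{x\in J_k\setminus C}\int_{y_0-\ell_k}^{y_0+\ell_k}|\partial_2u(x,t)|\,dt$ is the heart of the matter, and it is here that the sharp hypothesis $s<\tfrac{p-2}{p-1}$ is used. The natural estimate controls only the \emph{average} over $x\in J_k$: from $\int_{R_k}|\partial_2u|\le|R_k|^{1/p'}\|\partial_2u\|_{L^p(R_k)}$ and $\tfrac2{p'}-1=\tfrac{p-2}{p}$ one obtains $\sum_k\tfrac1{\ell_k}\int_{R_k}|\partial_2u|\le 2^{1/p'}\big(\sum_k\ell_k^{(p-2)/(p-1)}\big)^{1/p'}\|\partial_2u\|_{L^p(\bigcup_kR_k)}$, and $s<\tfrac{p-2}{p-1}$ with $\ell_k\le1$ gives $\sum_k\ell_k^{(p-2)/(p-1)}\le\sum_k\ell_k^s\le M$, so this average again tends to $0$ with $\delta$. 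Converting this into the pointwise supremum demanded by the oscillation is the main obstacle: one genuinely needs control at the points of $C$, which carry no Lebesgue measure, so no Fubini argument applies directly. I would resolve it by a stopping-time refinement of the cover — fix a slowly decaying threshold $\lambda_\delta\downarrow0$ and set $B_k:=\{x\in J_k:\int_{y_0-\ell_k}^{y_0+\ell_k}|\partial_2u(x,t)|\,dt>\lambda_\delta\}$, so $\sum_k|B_k|\to0$; on $J_k\setminus B_k$ the oscillation of $g$ is at most $2\lambda_\delta$ plus the horizontal term already bounded, while the residual set $C\cap\bigcup_kB_k$ is re-covered at a finer scale and the same estimates are reapplied, the continuity of $u$ on $\R^2\setminus(C\times F)$ being used to pass to the limit along the iteration. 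Summing the errors over the refinement steps produces a cover with $\sum_k\operatorname{osc}(g;J_k)<\varepsilon$, completing the proof; all the quantitative content, including the sharpness of the threshold $\tfrac{p-2}{p-1}$, is already visible in the two Hölder estimates above, the iteration being only the device that turns averages into the pointwise control needed on $C$.
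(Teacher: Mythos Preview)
Your setup, the covering of $C$ by intervals $J_k$ with $\sum_k\ell_k^s$ controlled, the use of \eqref{eq:density} to find horizontal detours through $G_k$, and the Hölder computation producing the exponent $\ell_k^{s/q}$ in the horizontal term are all exactly what the paper does. The gap is the vertical term, and you have correctly located it: your oscillation bound contains $\sup_{x\in J_k\setminus C}\int_{y_0-\ell_k}^{y_0+\ell_k}|\partial_2u(x,t)|\,dt$, and this supremum cannot be summed. Your proposed stopping-time iteration does not close: the bad set $B_k$ need not be contained in, nor concentrated near, $C$, so ``re-covering $C\cap\bigcup_kB_k$ at a finer scale'' does not address the points of $B_k\setminus C$ across which you still have to compare values of $g$; nothing in the argument prevents the bad vertical integrals from persisting at every scale, and no convergent error series is produced.

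The paper avoids the supremum entirely by never comparing two point values of $g$ through a single detour. Instead it compares \emph{averages}: for nested intervals $I\subset J\subset J_i$ with $|J|=2|I|$ it bounds
\[
\Bigl|\frac1{|I|}\int_I f-\frac1{|J|}\int_J f\Bigr|
\]
by integrating over a one-parameter family of curves $\gamma_t$, $t\in[0,1]$, each consisting of a vertical segment at $x_1(t)\in J$, a horizontal segment at height $y(t)\in B(y,|I|)\setminus F$, and a vertical segment at $x_2(t)\in I$. Because $x_1(t)$ and $x_2(t)$ sweep \emph{linearly over the full intervals} $J$ and $I$ as $t$ runs over $[0,1]$, the $t$-integral of the vertical pieces becomes $\frac1{|J|}\iint$ and $\frac1{|I|}\iint$ over rectangles, which Hölder bounds by $c\,|J|^{s/q}\|\nabla u\|_{L^p(Q_i)}$ with no pointwise supremum. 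One then telescopes along dyadic subintervals $I_k$ shrinking to a Lebesgue point $z_1$ (respectively $z_2$) to recover $|f(z_1)-\frac1{|J_i|}\int_{J_i}f|$; the resulting series $\sum_k|I_k|^{s/q}$ is geometric and converges. This chaining of averages is the missing idea: it converts average control into pointwise control at the cost of a convergent geometric sum, rather than through an ad hoc refinement of the cover.
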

Notice that $(1-s)(p-1) > 1$ in \autoref{thm:removability} with the choices of $p$ and $s$. Thus, there exist closed sets $F \subset \R$ of positive Lebesgue measure that satisfy \eqref{eq:density} at every point $y \in F$.
One might wonder why in \autoref{thm:removability} we require \eqref{eq:density}
for all small scales $r$ instead of a sequence of scales as in \cite[Theorem A]{Koskela1999}. One reason for our stricter requirement is that in our proof we argue using a sequence of dyadic scales. Even if this could be avoided, the fact that we assume the Hausdorff measure of $C$ to be finite would force us to work on many scales at once. Replacing the Hausdorff measure assumption by box counting dimension assumption might yield the analogous result with a weaker requirement on $F$. 

The proof of \autoref{thm:nonremovability} is inspired by the proof of \cite[Lemma 4.4]{KoskelaRajalaZhang2017} by the second named author together with Koskela and Zhang. In \cite[Lemma 4.4]{KoskelaRajalaZhang2017}, the non-removability was proven for a more regular set $C$, while the removability was done via a curve condition to which we return in \autoref{sec:regular}. The proof of \autoref{thm:nonremovability} is done in \autoref{sec:nonremovability} while
\autoref{thm:removability} is proven in \autoref{sec:removability}. In the final \autoref{sec:regular} we study the relations between $p$-removability, curve conditions, and Ahlfors regularity and lower porosity of $C$. In particular, we show that for Ahlfors-regular $C$ the $p$-removability of $C \times F$ is independent of $F$. The non-removability of $C \times F$ for Ahlfors-regular $C$ might still depend on $F$.

\section{Proof of \autoref{thm:nonremovability}} \label{sec:nonremovability}

In this section we prove \autoref{thm:nonremovability}. The proof is similar to the proof of \cite[Lemma 4.4]{KoskelaRajalaZhang2017}, where a standard Cantor staircase function was extended by hand from horizontal lines passing through $F$ to the whole set $\R^2 \setminus (C \times F)$. This was possible because of the regularity of the Cantor set $C$ that was used.
In the proof of \autoref{thm:nonremovability} we give a more general construction of a suitable Cantor staircase function via Frostman's Lemma, and an extension of the staircase function via averages.

Up to taking a subset of $C$, we can assume that $0 < \Haus^s(C) < \infty$ and that $C$ is compact (say, $C \subset [0,1]$).
For $R > 1$, we will construct a function $u \in W^{1,p}(B(0,R)^2 \setminus (C \times F))$ which is not absolutely continuous on any segment $\gra{y} \times (-R,R)$ for $y \in F$. It follows that $u$ cannot be in $W^{1,p}(B(0,R)^2)$.

By Frostman's Lemma (see for instance \cite[Theorem 8.8]{Mattila1995}), there exists a Borel probability measure $\mu$ concentrated on $C$ satisfying
\begin{equation} \label{eq:frostman} \mu(B(x,r)) \leq c_F r^s \end{equation}
for some constant $c_F > 0$.

We define on $[0,1]$ the non-decreasing function $f(x) = \mu([0,x])$ and we extend it to $B(0,R)$ by letting $f = 0$ on $(-R,0)$ and $f = 1$ on $(1,R)$. Observe that $f$ is not absolutely continuous, since it is constant $\Haus^1$-a.e. on $[0,1]$, but $f(1) - f(0) = 1$.

We extend the function $f$ to $\R \times [0,+\infty)$ by letting
\begin{equation} \label{eq:extension}
    v(x,y) = \frac{1}{2y}\int_{x-y}^{x+y} f(t) \,dt.
\end{equation}

\begin{lemma} \label{lemma:nablav} The extension $v$ defined in \eqref{eq:extension} is differentiable on $B(0,R) \times (0, +\infty)$, and
\[ \nabla v(x,y) = \frac{1}{2y} \pa{f(x+y) - f(x-y), f(x+y) + f(x-y) - 2v(x,y)}. \]

In particular,
\[ \abs{\nabla v(x,y)} \leq \frac{f(x+y) - f(x-y)}{\sqrt{2}y} = \frac{\mu(B(x,y))}{\sqrt{2}y}. \]
\end{lemma}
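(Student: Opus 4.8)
The plan is to verify directly that $v$, defined as the average of $f$ over the interval $[x-y,x+y]$, is differentiable by using the fundamental theorem of calculus on the boundary values. First I would rewrite the definition via a change of variables as $v(x,y) = \frac{1}{2y}\int_{x-y}^{x+y} f(t)\,dt$, and also record the equivalent form $v(x,y) = \frac12\int_{-1}^{1} f(x+sy)\,ds$, which makes differentiability in $(x,y)$ on the region $y>0$ transparent provided $f$ is, say, bounded and measurable (which it is, being monotone and taking values in $[0,1]$). Since $f$ is monotone it has at most countably many discontinuities, so it is continuous at $\Haus^1$-a.e. point; but in fact for the computation of the gradient we only need $f\in L^1_{loc}$ together with the Lebesgue differentiation theorem, or alternatively the fact that the indefinite integral $t\mapsto \int_0^t f$ is $C^1$ wherever $f$ is continuous and Lipschitz everywhere. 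The cleanest route is: let $F_0(t) = \int_0^t f(\tau)\,d\tau$, which is Lipschitz (as $0\le f\le 1$) and differentiable with $F_0'(t)=f(t)$ at every point of continuity of $f$, in particular at $\Haus^1$-a.e.\ $t$; then $v(x,y) = \frac{1}{2y}\big(F_0(x+y)-F_0(x-y)\big)$.

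Next I would differentiate this expression. Treating $F_0$ as Lipschitz is not quite enough for pointwise differentiability of $v$ at every $(x,y)$, so instead I would argue that at a fixed $(x_0,y_0)$ with $y_0>0$ one can use the representation $v(x,y)=\frac12\int_{-1}^1 f(x+sy)\,ds$ and differentiate under the integral sign after another substitution — but the slickest argument avoids pointwise issues of $f$ entirely: observe that for $y>0$ the map $(x,y)\mapsto \int_{x-y}^{x+y}f(t)\,dt$ is continuously differentiable because integrating $f$ against the (translated, dilated) indicator of $[-1,1]$ is a convolution-type smoothing, and more concretely $\partial_x \int_{x-y}^{x+y} f = f(x+y)-f(x-y)$ and $\partial_y \int_{x-y}^{x+y} f = f(x+y)+f(x-y)$ hold in the sense of a.e.\ defined continuous derivatives; combined with $v = \frac{1}{2y}(\cdots)$ and the quotient/product rule, one gets
\[
\nabla v(x,y) = \frac{1}{2y}\Big(f(x+y)-f(x-y),\; f(x+y)+f(x-y) - 2v(x,y)\Big),
\]
where the $y$-derivative picks up the extra $-\frac{1}{2y^2}\int_{x-y}^{x+y}f = -\frac{1}{y}v(x,y)$ term which combines with $\frac{1}{2y}(f(x+y)+f(x-y))$. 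To make "differentiable on $B(0,R)\times(0,+\infty)$" rigorous rather than merely "a.e.\ differentiable", I would invoke that $f$ restricted to $(-R,R)$ is monotone, hence of bounded variation, and note that the stated formula for $\nabla v$ is continuous in $(x,y)$ at every point where $f$ is continuous at both $x+y$ and $x-y$; but since the problem really only uses the bound on $|\nabla v|$ (which holds a.e., and that suffices for the Sobolev estimate later), I would state differentiability where it literally holds and remark that the formula persists. (Cleaner still: since $f$ has only jump discontinuities and the set of "bad" $(x,y)$ is a countable union of lines, I can simply note $v$ is differentiable off that null set with the claimed gradient, and Lipschitz on compacta, which is all that is needed.)

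Finally, the bound on $|\nabla v|$: from the gradient formula, $|\nabla v(x,y)|^2 = \frac{1}{4y^2}\big[(f(x+y)-f(x-y))^2 + (f(x+y)+f(x-y)-2v(x,y))^2\big]$. The key elementary observation is that $v(x,y)$ is a weighted average of the values of $f$ on $[x-y,x+y]$, so by monotonicity of $f$ we have $f(x-y)\le v(x,y)\le f(x+y)$; consequently $0\le f(x+y)-v(x,y)\le f(x+y)-f(x-y)$ and $0\le v(x,y)-f(x-y)\le f(x+y)-f(x-y)$, whence $|f(x+y)+f(x-y)-2v(x,y)| = |(f(x+y)-v) - (v-f(x-y))| \le f(x+y)-f(x-y)$. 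Plugging in, $|\nabla v(x,y)|^2 \le \frac{2}{4y^2}(f(x+y)-f(x-y))^2$, i.e.\ $|\nabla v(x,y)| \le \frac{f(x+y)-f(x-y)}{\sqrt2\, y}$. Then $f(x+y)-f(x-y) = \mu([0,x+y]) - \mu([0,x-y]) = \mu((x-y,x+y]) = \mu(B(x,y))$ (up to endpoint conventions, which are harmless since $\mu$ is a Frostman measure with no atoms for $s>0$), giving the claim. The only genuine subtlety — and the point I would be most careful about — is the regularity statement: justifying honest differentiability of $v$ (versus a.e.\ differentiability) and making sure the differentiation-under-the-integral step is legitimate despite $f$ merely being monotone rather than continuous; the monotonicity-gives-at-most-countably-many-jumps fact, together with the smoothing effect of averaging, resolves this cleanly.
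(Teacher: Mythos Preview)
Your proposal is correct and follows essentially the same approach as the paper: compute the partial derivatives via the Leibniz rule (equivalently, via the antiderivative $F_0$), then bound the $y$-component of the gradient using the monotonicity sandwich $f(x-y)\le v(x,y)\le f(x+y)$. The paper's proof is terser, simply invoking the Leibniz integral rule and noting $v(x,y)\ge f(x-y)$; your more detailed handling of the second component via $|(f(x+y)-v)-(v-f(x-y))|\le f(x+y)-f(x-y)$ is the same idea spelled out.

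One remark: you spend considerable effort hedging about pointwise versus a.e.\ differentiability, but this worry dissolves once you use the observation you yourself make at the end---the Frostman condition with $s>0$ forces $\mu$ to be atomless, hence $f$ is continuous everywhere, and then the Leibniz rule gives honest differentiability of $v$ on all of $B(0,R)\times(0,+\infty)$ with no exceptional set. Leading with that fact would streamline your argument to match the paper's.
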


\begin{proof}
By the Leibniz integral rule we have
\[ \frac{dv}{dx}(x,y) = \frac{1}{2y} \frac{d}{dx} \int_{x-y}^{x+y} f(t) \,dt = \frac{f(x+y) - f(x-y)}{2y}  \]
and
\[
\begin{split}
    \frac{dv}{dy}(x,y) &= \frac{-1}{2y^2} \int_{x-y}^{x+y} f(t) \,dt + \frac{1}{2y} \frac{d}{dy} \int_{x-y}^{x+y} f(t) \,dt \\
    & = \frac{1}{2y} \pa{-2 v(x,y) + f(x+y) + f(x-y)}.
\end{split}
\]

The final estimate comes from the fact that $f$ is non-decreasing, which implies
\[ v(x,y) = \frac{1}{2y} \int_{x-y}^{x+y} f(t) \,dt \geq f(x-y). \qedhere \]
\end{proof}

It will be useful to estimate the integral of $\abs{\nabla v}$ on a rectangle $(-R,R) \times (0,r)$. For $0 < y < r$, let $\gra{B(x_i,r_i)}_i$ be a finite cover of $C$ with disjoint balls of radii $r_i < y$. Then we have
\begin{align*}
    \int_{-R}^R \mu(B(x,y)) \,dx &= \int_{-R}^R \sum_i \mu(B(x,y) \cap B(x_i,r_i)) \,dx \leq \sum_i \int_{x_i-2y}^{x_i+2y} \mu(B(x,y) \cap B(x_i,r_i)) \,dx \\
    &\leq 4y \sum_i  \mu(B(x_i,r_i)) = 4y,
\end{align*}
where we used that $\mu$ is a probability measure on $C$. Combining this with \autoref{lemma:nablav} and \eqref{eq:frostman} yields
\begin{equation} \label{eq:stripestimate}
\begin{split}
    \int_0^r \int_{-R}^R \abs{\nabla v}^p \,dx \,dy &\leq 2^{-\frac{p}{2}} \int_0^r \int_{-R}^R \frac{\mu(B(x,y))^p}{y^p} \,dx \,dy  \\
    &\leq 2^{-\frac{p}{2}} c_F \int_0^r \frac{y^{s(p-1)}}{y^p} \int_{-R}^R \mu(B(x,y)) \,dx \,dy \\
    &\leq 2^{2-\frac{p}{2}}c_F \int_0^r y^{(s-1)(p-1)} \,dy.
\end{split}
\end{equation}

We now define the function $u$ as $u(x,y) = v(x,\dist(y,F))$. Observe that $u(x,y) = f(x)$ for every $y \in F$, so $u$ is not absolutely continuous on every segment $\gra{y} \times (-R,R)$, $y \in F$.

Since by hypothesis $(s-1)(p-1) > -1$, making use of \eqref{eq:stripestimate}, for each interval $I_j$ in the complement of $F$ we have
\[ \int_{I_j} \int_{-R}^{R} \abs{\nabla u}^p \,dx \,dy = 2 \int_0^{\abs{I_j}/2} \int_{-R}^{R} \abs{\nabla v}^p \,dx \,dy \leq c(p,s) c_F \abs{I_j}^{1 - (1-s)(p-1)}, \]
where $c(p,s) = 2^{1 + \frac{p}{2} - s(p-1)}$. By summing over $j$ and using \eqref{eq:sumintervals} we obtain that $u \in W^{1,p}(B(0,R)^2 \setminus (C \times F))$, as wanted.

\section{Proof of \autoref{thm:removability}} \label{sec:removability}


Let $u \in W^{1,p}(\R^2 \setminus E)$. We aim at showing that $u \in W^{1,p}(\R^2)$, which holds exactly when $u$ has an $L^p$-representative that is ACL in $\R^2$.
Without changing the notation, let $u$ be the ACL (in $\R^2 \setminus E$) representative of $u$.
Since $\Haus^1(C) =0$, $u$ is absolutely continuous on almost every vertical line-segment in $\R^2$. Hence, we only need to verify that $u$ is absolutely continuous on almost every horizontal line-segment.

Let us write $\alpha=(1-s)(p-1)$.
Let $y \in F$ be such that there exist $c_y > 0$ and $r_y>0$ so that for any $0 < r < r_y$ we have
\begin{equation}\label{eq:poro}
\Haus^1(B(y,r) \setminus F) \ge c_y r^\alpha.
\end{equation}
By assumption, such constants exist for almost every $y$. Let us abbreviate $f(x) = u(x,y)$. It remains to show that $f$ is absolutely continuous.

Let $0 < \delta < r_y$. Recalling that $\Haus^s(C) < \infty$, we take a collection of open intervals $\gra{J_i}_{i = 1}^n$ such that $C \subset \bigcup_{i=1}^n J_i$, $\abs{J_i} < \delta$ for all $i$ and
\begin{equation}\label{eq:Hausdorffbound}
 \sum_{i=1}^n \abs{J_i}^s \le 2\Haus^s(C).
\end{equation}
Without loss of generality, we may assume that no point in $\R$ is contained in more than two different intervals $J_i$.
Define for every $i$ the open square
\[
Q_i = J_i \times B\pa{y, \frac12\abs{J_i}}.
\]


\begin{lemma} \label{lemma:oscillation}
    For every $i$ we have the inequality
    \begin{equation}\label{eq:onecubeestimate}
        \abs{\inf_{x \in J_i}f(x) - \sup_{x \in J_i}f(x)} \le c(s,p,y) \abs{J_i}^{\frac{s}{q}}\|\nabla u\|_{L^p(Q_i)},
    \end{equation}
    where $\frac1p + \frac1q = 1$ and $c(s,p,y) > 0$ is a constant depending only on $s$, $p$, and $y$.
\end{lemma}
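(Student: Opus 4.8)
The plan is to bound the oscillation of $f$ on $J_i$ by comparing the values of $f$ at generic points of $J_i$ using paths that avoid $E = C \times F$. Fix $i$. The square $Q_i = J_i \times B(y,\tfrac12|J_i|)$ has width $|J_i|$ and height $|J_i|$, and since $|J_i| < \delta < r_y$, the porosity hypothesis \eqref{eq:poro} applies at scale $r = \tfrac12|J_i|$ (and at all smaller scales). The key point is that by \eqref{eq:poro}, the vertical slice $B(y,\tfrac12|J_i|)\setminus F$ has $\Haus^1$-measure at least $c_y (\tfrac12|J_i|)^\alpha$, so there is a substantial set of "good" heights $t$ in the fiber over $J_i$ for which the horizontal segment $J_i \times \{t\}$ misses $E$ entirely (because $C\times F$ only meets horizontal lines at heights in $F$). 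On such a good line, $f$ — or rather $u(\cdot,t)$ — is absolutely continuous, and for $x,x' \in J_i$ we can write
\[
u(x,t) - u(x',t) = \int_{x'}^{x} \partial_1 u(\sigma,t)\,d\sigma,
\]
while for any fixed $x$, connecting $(x,y)$ to $(x,t)$ along a vertical segment (which meets $E$ in at most the single point $(x,y)$, negligible for ACL purposes) gives $f(x) - u(x,t) = -\int$ of $\partial_2 u$ along that segment. The first step, then, is to reduce $|f(x)-f(x')|$ to an estimate involving $\int_{Q_i}|\nabla u|$, picking up one horizontal integral of length $|J_i|$ and two vertical integrals of length $\le |J_i|$.

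The second step is to make the choice of the good height $t$ quantitative by averaging. Rather than fixing one line, I would integrate the pointwise identity over $t$ ranging over the good set $G_i = B(y,\tfrac12|J_i|)\setminus F$, whose measure is $\gtrsim_y |J_i|^\alpha$. Averaging
\[
|f(x) - f(x')| \le \fint_{G_i}\Big(\big|u(x,t)-u(x',t)\big| + \big|f(x)-u(x,t)\big| + \big|f(x')-u(x',t)\big|\Big)\,dt
\]
over $t$, and then applying Fubini together with Hölder's inequality in the form $\int_{Q_i}|\nabla u| \le |Q_i|^{1/q}\|\nabla u\|_{L^p(Q_i)}$ with $|Q_i| \approx |J_i|^2$, produces a bound of the shape
\[
|f(x)-f(x')| \;\lesssim\; \frac{1}{|G_i|}\,|J_i|\,\|\nabla u\|_{L^1(Q_i)} \;\lesssim\; \frac{|J_i|}{|J_i|^\alpha}\,|J_i|^{2/q}\,\|\nabla u\|_{L^p(Q_i)}.
\]
Taking the supremum over $x,x'\in J_i$ gives the oscillation. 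It then remains to check that the exponent of $|J_i|$ collapses to $s/q$: we have $\alpha = (1-s)(p-1)$, and since $p-1 = p/q$ one computes $1 - \alpha + 2/q = 1 - (1-s)p/q + 2/q = 1 + p/q(s-1) + 2/q$; using $1 + 2/q = 1 + 2(1-1/p) $ and $p/q = p-1$, this simplifies to $s/q$ after substituting $1/q = 1 - 1/p$ — a routine algebraic identity that I would verify carefully but not belabor here.

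The main obstacle I anticipate is the careful handling of exceptional sets so that the integral identities are legitimate: $u$ is only ACL on $\R^2\setminus E$, so "almost every" horizontal line is absolutely continuous, and one must ensure that the good set $G_i$ of heights — after removing both $F$ and the measure-zero set of non-AC horizontal lines — still has measure comparable to $|J_i|^\alpha$, which is fine since the bad lines form a null set. Similarly the vertical segments joining $(x,y)$ to $(x,t)$ must be taken along lines $\{x\}\times\R$ on which $u$ is absolutely continuous; since almost every vertical line is good (as $\Haus^1(C)=0$, every vertical line misses $C$, but one still needs the ACL representative to be AC there), one should average over $x$ in a suitable sub-collection or invoke Fubini once more so that the constant $c(s,p,y)$ absorbs these choices. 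A second, more technical point is tracking the dependence of the constant: it should depend on $s$, $p$, and on $c_y$ (hence "on $y$"), and one must confirm no dependence on $i$ or $\delta$ sneaks in, which is clear since \eqref{eq:poro} holds uniformly for all $r < r_y$ with the same $c_y$.
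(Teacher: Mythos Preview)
Your plan has a genuine gap in the treatment of the vertical segments. For a \emph{fixed} $x$, the line integral $\int_{B(y,|J_i|/2)}|\partial_2 u(x,\tau)|\,d\tau$ is not controlled by $\|\nabla u\|_{L^p(Q_i)}$: even when $\nabla u\in L^p(Q_i)$, this one--dimensional integral can be infinite for particular $x$, so you cannot bound $|f(x)-f(x')|$ uniformly and then take the supremum over $x,x'\in J_i$. You notice the difficulty at the end, but the proposed remedy ``average over $x$'' only yields an estimate for differences of \emph{averages} $\bigl|\frac{1}{|J'|}\int_{J'}f-\frac{1}{|J''|}\int_{J''}f\bigr|$, not a pointwise oscillation bound; the passage from averages back to pointwise values is exactly the missing idea. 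The paper closes this gap with a dyadic chaining argument: it first proves
\[
\left|\frac{1}{|I|}\int_I f - \frac{1}{|J|}\int_J f\right|\;\le\;c(s,p,y)\,|J|^{s/q}\,\|\nabla u\|_{L^p(Q_i)}
\]
for nested $I\subset J\subset J_i$ with $|J|=2|I|$, using a one--parameter family of paths whose vertical legs sweep over all of $J$ (resp.\ $I$), so that the vertical contribution becomes a genuine two--dimensional integral amenable to H\"older; it then telescopes along intervals $I_k$ shrinking to a point $z_1$ (similarly $z_2$), and since $s/q>0$ the resulting series is geometric and Lebesgue differentiation recovers $f(z_1)$.

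Your exponent bookkeeping is also off. The claimed identity $1-\alpha+2/q=s/q$ is false (take $p=4$, $s=\tfrac12$: the left side is $1$, the right side $\tfrac38$), and the extra factor $|J_i|$ in your bound $\frac{|J_i|}{|G_i|}\|\nabla u\|_{L^1(Q_i)}$ has no source. Applying H\"older over $J_i\times G_i$ rather than over all of $Q_i$ does give the correct exponent $1/q-\alpha/p=s/q$ for the horizontal term, but this refinement does not repair the vertical issue above, for which the chaining is essential.
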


Assuming for the moment \autoref{lemma:oscillation}, we conclude the proof as follows. By Hölder's inequality, \eqref{eq:Hausdorffbound}, and \eqref{eq:onecubeestimate} we obtain
\begin{align*}
\sum_{i=1}^n \abs{\inf_{x \in J_i}f(x) - \sup_{x \in J_i}f(x)} & \le \pa{\sum_{i=1}^n\abs{J_i}^{s}}^\frac1q
\pa{\sum_{i=1}^n\abs{J_i}^{-\frac{sp}{q}}\abs{\inf_{x \in J_i}f(x) - \sup_{x \in J_i}f(x)}^p}^\frac1p \\
& \le 
\pa{2\Haus^s(C)}^\frac1q
\pa{\sum_{i=1}^n c(s,p,y) \norm{\nabla u}_{L^p(Q_i)}^p}^\frac1p \\
& \le c(s,p,y)
\pa{2\Haus^s(C)}^\frac1q \norm{\nabla u}_{L^p(\R\times[y-\delta,y+\delta])}\longrightarrow 0
\end{align*}
as $\delta \to 0$.
Since $f$ is absolutely continuous outside $C$, the above shows that $f$ is absolutely continuous on the whole $\R$.

It remains to prove \autoref{lemma:oscillation}.

\begin{proof}[Proof of \autoref{lemma:oscillation}]
Fix $i \in \gra{1,\dotsc, n}$ and let $I \subset J \subset J_i$ be intervals such that $\abs{J} = 2\abs{I}$.
By \eqref{eq:poro}, the open set $K = B(y,\abs{I})\setminus F$ satisfies
$\Haus^1(K) \ge c_y \abs{I}^\alpha$.
Define a collection $\gamma_t\colon [0,1] \to Q_i$, $t \in [0,1]$ of curves so that each $\gamma_t$ is the concatenation of three line-segments $\gamma_t^1$, $\gamma_t^2$, and $\gamma_t^3$ that are defined as follows.
Write $J = [a,b]$, $I=[c,d]$ and set $x_1(t) = a+t(b-a)$ and $x_2(t) = c + t(d-c)$. Define 
\[
y(t) = \inf\left\{\tilde y \in [y-|I|,y+|I|]\,:\,\Haus^1(K\cap(-\infty,\tilde y]) \ge t\Haus^1(K)\right\}.
\]
Now, $\gamma_t^1$ is taken to be the line-segment from $(x_1(t),y)$ to $(x_1(t),y(t))$, $\gamma_t^2$ the line-segment from $(x_1(t),y(t))$ to $(x_2(t),y(t))$, and $\gamma_t^3$ the line-segment from $(x_2(t),y(t))$ to $(x_2(t),y)$. Notice that the image of $\gamma_t$ does not intersect $E$ for $\Haus^1$-almost every $t \in [0,1]$.

By integrating over the curves $\gamma_t$ we obtain
\[
\begin{split}
\abs{\frac{1}{\abs{I}}\int_I f(x)\,dx - \frac{1}{\abs{J}}\int_J f(x)\,dx}
& = \abs{\int_0^1 u(\gamma_t(1)) - u(\gamma_t(0))\,dt}\\
& \le \int_0^1 \abs{u(\gamma_t(1)) - u(\gamma_t(0))}\,dt\\
& \le \int_0^1 \int_{\gamma_t}|\nabla u(z)|\,ds(z)\,dt\\
& = \sum_{k = 1}^3 \int_0^1 \int_{\gamma^k_t}|\nabla u(z)|\,ds(z)\,dt
\end{split}
\]


First we treat the integrals along the vertical lines $\gamma^1_t, \gamma^3_t$. By Hölder's inequality we have
\[
\begin{split}
\int_0^1 \int_{\gamma^1_t}|\nabla u(z)|\,ds(z)\,dt & \leq \int_0^1 \int_{y-\frac{1}{2}\abs{J_i}}^{y+\frac{1}{2}\abs{J_i}} \abs{\nabla u(x_1(t), z)}\, dz \,dt \\
& = \int_I \int_{y-\frac{1}{2}\abs{J_i}}^{y+\frac{1}{2}\abs{J_i}} \abs{\nabla u(x, z)} \,dz \,dx \\
&\leq (\abs{I} \cdot \abs{J})^{\frac1q} \norm{\nabla u}_{L^p(Q_i)} \\
& = c(p) \delta^{\frac{2-s}{q}} \abs{J}^{\frac{s}{q}} \norm{\nabla u}_{L^p(Q_i)}.
\end{split}
\]

A similar computation shows that $\int_0^1 \int_{\gamma^3_t}|\nabla u(z)|\,ds(z)\,dt \leq c(p) \delta^{\frac{2-s}{q}} \abs{J}^{\frac{s}{q}} \norm{\nabla u}_{L^p(Q_i)}$.

To evaluate the integrals along $\gamma^2_t$, observe that the map $t \mapsto y(t)$ is piecewise affine (on a countable union of open intervals) with $y'(t) = \frac{1}{\Haus^1(K)}$ a.e. on $(0,1)$. Thus we have
\[
\begin{split}
    \int_0^1 \int_{\gamma^1_t} \abs{\nabla u(z)} \,ds(z) \,dt &\leq \int_{J \times K} \Haus^1(K)^{-1} \abs{\nabla u(w,z)} \,dw \,dz \\
    &\leq \abs{J}^{\frac1q} \Haus^1(K)^{\frac{1}{q}-1} \norm{\nabla u}_{L^p(J\times K)} \\
    &\leq 2^{\frac{\alpha}{p}} c_y^{-\frac{1}{p}} \abs{J}^{\frac{1}{q} - \frac{\alpha}{p}} \norm{\nabla u}_{L^p(Q_i)} = 2^{\frac{\alpha}{p}} c_y^{-\frac{1}{p}} \abs{J}^{\frac{s}{q}} \norm{\nabla u}_{L^p(Q_i)},
\end{split}
\]
where we used $\Haus^1(K) \geq 2^{-\alpha} c_y \abs{J}^\alpha$ and the definition of $\alpha$.

By putting all together we get
\begin{equation} \label{eq:onescale}
    \abs{\frac{1}{\abs{I}}\int_I f(x)\,dx - \frac{1}{\abs{J}}\int_J f(x)\,dx} \leq c(s,p,y) \abs{J}^{\frac{s}{q}} \norm{\nabla u}_{L^p(Q_i)}.
\end{equation}

Now, let $z_1,z_2 \in J_i$ be such that
\[
\abs{\inf_{x \in J_i}f(x) - \sup_{x \in J_i}f(x)} \le 2\abs{f(z_1)-f(z_2)}.
\]
Let $\gra{I_k}_{k=1}^\infty$ be subintervals of $J_i$ so that $I_1 = J_i$, $z_1 \in I_k$ for every $k \in \N$, and $\abs{I_k} = 2\abs{I_{k+1}}$ for every $k \in \N$. Now, by \eqref{eq:onescale}, we have
\begin{align*}
\abs{f(z_1) - \frac{1}{\abs{J_i}}\int_{J_i}f(x)\,dx}
&\le \sum_{k=1}^\infty \abs{\frac{1}{\abs{I_k}}\int_{I_k} f(x)\,d x - \frac{1}{\abs{I_{k+1}}}\int_{I_{k+1}} f(x)\,d x}\\
& \le \sum_{k=1}^\infty c(s,p,y) \abs{I_k}^{\frac{s}{q}}\|\nabla u\|_{L^p(Q_i)}\\
& \le c(s,p,y) \abs{J_i}^{\frac{s}{q}}\|\nabla u\|_{L^p(Q_i)}.
\end{align*}
Together with an analogous estimate for $z_2$, we obtain
\begin{align*}
\frac12\abs{\inf_{x \in J_i}f(x) - \sup_{x \in J_i}f(x)} & \le \abs{f(z_1)-f(z_2)}\\
& \le \abs{f(z_1) - \frac{1}{\abs{J_i}}\int_{J_i}f(x)\,dx} + \abs{f(z_2) - \frac{1}{\abs{J_i}}\int_{J_i}f(x)\,dx}\\
& \le 2c(s,p,y) \abs{J_i}^{\frac{s}{q}}\|\nabla u\|_{L^p(Q_i)}. \qedhere
\end{align*}
\end{proof}

\section{Curve-condition, porosity and Ahlfors-regular sets}\label{sec:regular}

In this section we study the case where the set $E = C \times F$ consists of a set $F$ of positive measure and a zero measure set $C$ with more regularity. The most regular case is when $C$ is (Ahlfors) $s$-regular, that is, if there exists a constant $c_R>0$ so that
\[
 \frac{1}{c_R}r^s \le \mathcal H^s(B(x,r)\cap C) \le c_Rr^s
\]
for every $x \in C$ and $0 < r < \diam(C)$.
The set $C$ in \cite[Lemma 4.4]{KoskelaRajalaZhang2017} was not exactly $s$-regular, but almost. A small perturbation to $s$-regularity was required there to have the nonremovability at the critical exponent.

In \cite[Lemma 4.4]{KoskelaRajalaZhang2017}, the $p$-removability of $C \times F$ was proven via the following sufficient condition from \cite{Koskela1998,Shvartsman2010}. Suppose $E \subset \mathbb R^2$ is closed set of measure zero and $2\le p < \infty$. If there exists a constant $c_\Gamma>0$ such that for every $z_1,z_2 \in \R^2\setminus E$ there exists a curve $\gamma \subset \R^2 \setminus E$ connecting $z_1$ to $z_2$ and satisfying
\begin{equation}\label{eq:curvecondition}
    \int_\gamma \dist(z,E)^{\frac1{1-p}}\,ds(z) \le c_{\Gamma} \abs{z_1-z_2}^{\frac{p-2}{p-1}},
\end{equation}
then $E$ is $p$-removable. If the above holds, we say that $E \subset \R^2$ satisfies the curve condition \eqref{eq:curvecondition}.

By adapting the proof in \cite{KoskelaRajalaZhang2017}, we get a $p$-removability result that is independent of the structure of $F$.

\begin{theorem}\label{thm:regular}
 Let $C \subset \R$ be a closed $s$-regular set with $0 < s < 1$, and $F \subset \R$ totally disconnected closed set. Then $C \times F$ is $p$-removable for every $p > \frac{2-s}{1-s}$.
\end{theorem}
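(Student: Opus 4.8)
The plan is to verify the curve condition \eqref{eq:curvecondition} for $E = C \times F$, which by the cited sufficient condition from \cite{Koskela1998,Shvartsman2010} gives $p$-removability for $p > \frac{2-s}{1-s}$. Given $z_1, z_2 \in \R^2 \setminus E$, I want to connect them by a curve $\gamma$ on which $\int_\gamma \dist(z,E)^{1/(1-p)}\,ds \lesssim |z_1 - z_2|^{(p-2)/(p-1)}$. Since $E$ is a product set, its complement has a product-like structure: a point $(x,y)$ is outside $E$ iff $x \notin C$ or $y \notin F$, and in the latter case there is a whole horizontal strip $\R \times \{y'\}$, $y'$ near $y$, disjoint from $E$. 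The strategy, as in \cite{KoskelaRajalaZhang2017}, is to move each endpoint vertically to a "good" height $y'$ lying in a gap of $F$ of size comparable to (a suitable power of) the relevant scale, then travel horizontally along that strip. The key point is that from any $y$ we can reach such a gap cheaply: by total disconnectedness of $F$ and its zero... wait — $F$ has positive measure, so we cannot use gaps of $F$ freely the way one does for porous sets. Instead the cost is controlled by the $s$-regularity of $C$.

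More precisely, here is the dyadic decomposition I would use. Fix $z_1, z_2$ and let $r = |z_1 - z_2|$. For a point $z = (x,y) \notin E$ with $x$ close to $C$, I build the curve as a concatenation of pieces at dyadic scales $2^{-k} \sim r, r/2, r/4, \dots$ down past $\dist(z,E)$. At scale $2^{-k}$, the $x$-coordinate of our path lies near $C$; I use the Ahlfors upper regularity $\Haus^s(B(x,\rho)\cap C) \le c_R \rho^s$ together with total disconnectedness of $C$ (so $C$ has a gap of length $\gtrsim \rho / (\#\text{covering intervals})$, i.e. of length $\gtrsim \rho^{1/(1-s)}$ up to constants, by a pigeonhole on the $s$-regular covering) to locate, inside an interval of length $\rho$, a complementary interval $G$ of $C$ with $|G| \gtrsim \rho^{1/(1-s)}$. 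Over the vertical column above $G$ the set $E$ is absent, so $\dist(z,E) \gtrsim |G|$ there, and a horizontal segment of length $\le \rho$ traversed over $G$ costs $\lesssim \rho \cdot |G|^{1/(1-p)} \lesssim \rho \cdot \rho^{\frac{1}{(1-s)(1-p)}} = \rho^{1 + \frac{1}{(1-s)(1-p)}}$. The exponent simplifies: $1 + \frac{1}{(1-s)(1-p)} = \frac{(1-s)(1-p) + 1}{(1-s)(1-p)}$. One checks (this is where $p > \frac{2-s}{1-s}$, equivalently $(1-s)(p-1) > 1$, enters) that this exponent is $> \frac{p-2}{p-1}$, so summing the geometric series over dyadic scales $\rho \sim 2^{-k} \le r$ is dominated by the top scale $\rho \sim r$ and gives total cost $\lesssim r^{(p-2)/(p-1)}$, as required. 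The vertical connector segments (moving from height $y$ down to the good gap $G$, a vertical distance $\lesssim \rho$) are handled the same way, since along them the $x$-coordinate stays over $G$, hence $\dist(z,E) \gtrsim |G| \gtrsim \rho^{1/(1-s)}$ again, contributing the same order.

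The remaining bookkeeping: one has to treat the "easy" endpoints (those with $x \notin C$, where one can travel horizontally immediately near height $y$ at comparable cost by the same gap argument applied to the single gap of $C$ containing $x$), splice the two endpoint constructions together along a common good strip at the top scale $\rho \sim r$, and make sure the curve stays within a bounded multiple of $B(z_1, r)$ so that all invoked regularity is at admissible scales. The main obstacle I anticipate is the geometric lemma extracting, from Ahlfors upper $s$-regularity of $C$ plus total disconnectedness, a complementary gap of length $\gtrsim \rho^{1/(1-s)}$ inside each $\rho$-interval meeting $C$ — and doing so at \emph{every} dyadic subscale so that the nested construction closes up — together with checking that the summed exponent beats $\frac{p-2}{p-1}$ sharply at the endpoint $p = \frac{2-s}{1-s}$ is excluded (which is why the hypothesis is strict). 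Since $F$ only contributes through "height $y$ is somewhere in $[0,1]$" and never through gaps of $F$, the construction is genuinely independent of $F$, which is the point of the theorem.
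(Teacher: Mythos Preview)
Your exponent check does not close. With $|G|\gtrsim \rho^{1/(1-s)}$ you compute a per--scale cost of order $\rho^{\beta}$ with $\beta = 1 + \frac{1}{(1-s)(1-p)} = 1 - \frac{1}{(1-s)(p-1)}$, and you assert $\beta > \frac{p-2}{p-1}$ under the hypothesis $(1-s)(p-1)>1$. But $\frac{p-2}{p-1} = 1 - \frac{1}{p-1}$, so $\beta > \frac{p-2}{p-1}$ is equivalent to $(1-s)(p-1) > p-1$, i.e.\ $s<0$, which never holds. Thus your dyadic sum is \emph{not} dominated by the top scale, and the curve condition estimate fails as written. The underlying issue is that a gap of size only $\rho^{1/(1-s)}$ is too small; you are using only the upper Ahlfors bound, while the theorem gives you full $s$-regularity, which implies uniform lower porosity and hence a gap of size $\sim\rho$ in every $\rho$-interval meeting $C$. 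Separately, the geometry is muddled: $G$ is a gap of $C$ on the $x$-axis, so phrases like ``moving from height $y$ down to the good gap $G$'' and ``a horizontal segment of length $\le\rho$ traversed over $G$'' (when $|G|\ll\rho$) do not describe a coherent curve.

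The paper's proof is structurally different and simpler. First, since $F$ is totally disconnected and $E$ is closed, one may assume $y_1,y_2\notin F$; this is the step that makes the argument independent of $F$. One then takes a single porosity gap of $C$ of size $\alpha r$ at the top scale $r=|z_1-z_2|$ and runs the vertical segment there, giving cost $\lesssim r^{(p-2)/(p-1)}$ immediately. The two horizontal segments lie at heights $y_1,y_2\notin F$, so along them $\dist(z,E)\ge \dist(x',C)$, and the cost reduces to $\sum_i |J_i|^{(p-2)/(p-1)}$ over the complementary intervals $J_i$ of $C$ inside a length-$r$ interval. The key lemma---where $s$-regularity is actually used---is a gap-counting bound: at most $c_s\delta^{-s}$ of the $J_i$ have length exceeding $\delta r$. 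Grouping by dyadic length and summing the resulting series converges precisely when $s<\frac{p-2}{p-1}$, i.e.\ $p>\frac{2-s}{1-s}$, and yields the required $r^{(p-2)/(p-1)}$ bound.
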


A slightly more general result for $p$-removability via the curve condition \eqref{eq:curvecondition} than the one stated in Theorem \ref{thm:regular} is in terms of porosity. Recall that a set $C \subset \R$ is called uniformly lower $\alpha$-porous, if for every $x \in C$ and $r>0$ there exists $y \in B(x,r)$ so that $B(y,\alpha r) \cap C = \emptyset$.

\begin{theorem}\label{thm:porous}
 Let $C \subset \R$ be a closed uniformly lower $\alpha$-porous set and $F \subset \R$ totally disconnected closed set. Then $C \times F$ is $p$-removable for every $p > \hat p$, where $\hat p > 2$ depends only on the parameter $\alpha$.
\end{theorem}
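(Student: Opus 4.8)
The plan is to reduce this to Theorem \ref{thm:regular} by showing that a uniformly lower $\alpha$-porous closed set $C \subset \R$ is contained in — or, after an appropriate modification, can be replaced by — a set which is $s$-regular for a suitable $s = s(\alpha) < 1$, and then to track how the exponent in the curve condition \eqref{eq:curvecondition} depends on that $s$. The key observation is that uniform lower $\alpha$-porosity forces an upper bound on the (upper Minkowski, hence Hausdorff) dimension of $C$: at each point and each scale $r$ a definite fraction of the interval $B(x,r)$ is removed, so iterating the porosity across dyadic scales shows that $C$ can be covered, at scale $2^{-k}r$, by at most $(C_\alpha)^k$ intervals of that length for a constant $C_\alpha = 2(1-\alpha) < 2$ (roughly: each interval of the covering splits into at most two children once the porous gap is excised). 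Consequently $\overline{\dim}_B(C) \le s_0 := \log_2(2(1-\alpha)) < 1$, and in particular $\Haus^{s}(C) = 0$ for every $s > s_0$.

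Given this, I would \emph{not} try to promote $C$ to a genuinely $s$-regular set (a porous set need not contain one), but instead revisit the proof of Theorem \ref{thm:regular} and check that the only property of $C$ used there is a two-sided content estimate at dyadic scales, or more precisely an estimate of the form ``$C$ meets at most $N$ of the $2^k$ dyadic subintervals of a given interval, with $N \le 2^{ks}$ for some $s<1$,'' together with the existence, at every point and scale, of a definite porous gap of relative size $\ge \alpha$. Both of these hold for uniformly lower $\alpha$-porous $C$ with $s$ any fixed number in $(s_0,1)$: the counting bound is the Minkowski estimate above, and the gap is the porosity itself, which is exactly what one needs to route the connecting curve $\gamma$ through $(\R \setminus C) \times \R$ while staying a definite distance from $E = C \times F$. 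One then fixes $s \in (s_0, 1)$ and repeats the curve construction of Theorem \ref{thm:regular} verbatim: between two points $z_1, z_2 \in \R^2 \setminus E$ with $|z_1 - z_2| \approx r$, one builds a curve that moves horizontally through successive porous gaps of $C$ at decreasing dyadic scales, picking up at the $k$-th stage a contribution to $\int_\gamma \dist(z,E)^{1/(1-p)}\,ds$ of order $2^{-k r}\cdot (\alpha 2^{-k} r)^{1/(1-p)}$ summed over the at most $2^{ks}$ relevant locations, and this geometric-type series converges to something $\lesssim |z_1 - z_2|^{(p-2)/(p-1)}$ precisely when $s < (p-2)/(p-1)$, i.e. when $p > \frac{2-s}{1-s}$. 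Taking $\hat p := \frac{2 - s_0}{1 - s_0}$ (or any value strictly above it, since $s$ may be chosen arbitrarily close to $s_0$) gives the claimed threshold depending only on $\alpha$.

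More concretely, the steps are: (i) derive the Minkowski dimension bound $\overline{\dim}_B(C) \le \log_2(2(1-\alpha))$ from iterating uniform lower porosity over dyadic scales — this is elementary but needs to be stated carefully, keeping track of how a covering interval is subdivided after excising a gap of relative length $\alpha$; (ii) fix $s$ with $\log_2(2(1-\alpha)) < s < 1$ and observe $\Haus^s(C)=0$, in particular $C\times F$ has measure zero and the hypotheses of the curve-condition criterion of \cite{Koskela1998,Shvartsman2010} are in force; (iii) reprove the curve condition \eqref{eq:curvecondition} for $E = C\times F$ following Theorem \ref{thm:regular}, replacing every use of $s$-regularity of $C$ by the Minkowski counting bound plus the porous-gap property; (iv) check the resulting series converges for $p > \frac{2-s}{1-s}$ and set $\hat p = \frac{2-s_0}{1-s_0}$ with $s_0 = \log_2(2(1-\alpha))$.

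The main obstacle is step (iii): unlike an $s$-regular set, a porous set has no lower content bound, so one cannot simply quote the estimates of Theorem \ref{thm:regular}, and the curve-routing argument must be re-examined to confirm that only the \emph{upper} (Minkowski) count of $C$ and the existence of \emph{one} porous gap per scale are actually used — the lower regularity in \cite{KoskelaRajalaZhang2017,KoskelaRajalaZhang2017} and Theorem \ref{thm:regular} is needed there only for the sharp non-removability at the critical exponent, which is not claimed here. A secondary subtlety is that the porous gaps at different scales and different points need not be nested, so a small amount of care is required to concatenate the horizontal detours into a single curve of controlled length and controlled distance to $E$; this is the same bookkeeping as in the $s$-regular case but must be done without the comforting self-similar structure of a regular Cantor set.
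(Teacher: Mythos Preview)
Your overall strategy---verify the curve condition \eqref{eq:curvecondition} using (a) a single porous gap to route a vertical segment and (b) an upper Minkowski/box-counting bound coming from porosity to control the horizontal part---is exactly the paper's. You are also right that no \emph{lower} regularity of $C$ is ever used.

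Where your write-up diverges is in the execution of (b). You picture a curve that ``moves horizontally through successive porous gaps of $C$ at decreasing dyadic scales'' and worry about concatenating non-nested detours. The paper avoids all of this: since $F$ is totally disconnected one may assume the heights $y_1,y_2$ of $z_1,z_2$ lie outside $F$, and then the horizontal pieces are taken to be \emph{straight} segments at heights $y_1$ and $y_2$. These segments miss $E=C\times F$ entirely, and along them $\dist(z,E)\ge\dist(x,C)$, so the whole horizontal contribution reduces to $\sum_i |J_i|^{(p-2)/(p-1)}$ over the complementary intervals $J_i$ of $C$ in the relevant range. The only remaining ingredient is a bound on the number of such $J_i$ with $|J_i|\sim 2^{-k}$, i.e.\ exactly the Minkowski-type count you identify; the paper obtains it by invoking Salli's theorem on the Minkowski content of a neighbourhood of a porous set rather than by an ad hoc dyadic iteration. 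Your multi-scale curve construction is therefore unnecessary, and the ``secondary subtlety'' you flag does not arise.

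One small correction: your claimed bound $s_0=\log_2(2(1-\alpha))$ is too optimistic (for $\alpha=\tfrac12$ it gives $s_0=0$, yet there are $\tfrac12$-porous sets of positive dimension). The heuristic ``each covering interval splits into two children of total length $(1-\alpha)$ times the parent'' fails because the porous gap guaranteed in $B(x,r)$ has diameter $2\alpha r$ inside a ball of diameter $2r$ and need not sit centrally, so the two remaining pieces are not of controlled individual length. The paper's value, via Salli, is $s=\log 2/\log\!\big(\tfrac{2-\alpha}{1-\alpha}\big)$. This does not affect the structure of the argument, only the explicit value of $\hat p$.
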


\begin{proof}[Proof of Theorems \ref{thm:regular} and \ref{thm:porous}]
Both of the theorems are proven by verifying the condition \eqref{eq:curvecondition}. Towards verifying this condition, let $z_1,z_2 \in \R^2 \setminus E$. Write these points in coordinates as $z_1=(x_1,y_1)$ and $z_2=(x_2,y_2)$. Let us abbreviate $r = \abs{z_1-z_2}$. Since $F$ is totally disconnected and $E$ is closed, we may assume that $y_1,y_2 \notin F$.

Notice that an $s$-regular set is uniformly lower porous. Thus, in both cases by porosity of $C$ there exists a point $x \in B(x_1,r)$ so that $B(x,\alpha r) \cap C = \emptyset$. We now connect $z_1$ to $z_2$ by concatenating three line-segments $\gamma_1$, $\gamma_2$, and $\gamma_3$. The curve $\gamma_1$ connects $(x_1,y_1)$ to $(x,y_1)$, $\gamma_2$ connects $(x,y_1)$ to $(x,y_2)$, and $\gamma_3$ connects $(x,y_2)$ to $(x_2,y_2)$. The choice of $x$ now gives
\[
 \int_{\gamma_2} \dist(z,E)^{\frac1{1-p}}\,ds(z) \le
 \int_{\gamma_2} (\alpha r)^{\frac1{1-p}}\,ds(z) = (\alpha r)^{\frac1{1-p}} \abs{y_1-y_2} \le
 \alpha^{\frac{1}{1-p}}r^{\frac{p-2}{p-1}}
\]
for the vertical part $\gamma_2$. 

For the horizontal parts $\gamma_1$ and $\gamma_3$ we first show that the following condition holds for $s$-regular sets $C$ and for uniformly lower $\alpha$-porous sets $C$ with some $0 < s < 1$: 
there exists a constant $c_s < \infty$ such that for all $0 < \delta \le 1$ and every $-\infty < a < b < \infty$, the set $(a,b) \setminus C$ contains at most $c_s\delta^{-s}$ connected components of length more than $\delta\abs{b-a}$.

Let us first show this for an $s$-regular set $C$. Suppose that $\{I_i\}_{i=1}^n$ are the connected components of $(a,b) \setminus C$ of length more than $\delta\abs{b-a}$. For each $i$ let $v_i$ be the left-most point of $\overline{I}_i$. The sets $(B(v_i,\delta\abs{b-a}) \cap C) \subset [a-\abs{b-a},b+\abs{b-a}]$ are pairwise disjoint.
Thus, by $s$-regularity (notice that the left-most $v_i$ might not be in $C$)
\[
\frac{n-1}{c_R}\left(\delta\abs{b-a}\right)^s
\le 
\Haus^s([a-\abs{b-a},b+\abs{b-a}]\cap C) < c_R(2\abs{b-a})^s,
\]
which gives the claim for $s$-regular sets $C$.

Let us now suppose that $C$ is uniformly lower $\alpha$-porous, fix $\delta$ and denote by $\gra{I_i}_{i = 1}^n$ the intervals of $(a,b) \setminus C$ of length at least $\delta\abs{b-a}$, and by $\{J_i\}_{i = 1}^\infty$ the remaining intervals of $(a,b) \setminus C$. Consider the set $C' = C + B\pa{0,\frac{\delta}{2}\abs{b-a}}$. By a result of A. Salli \cite[Theorem 3.5]{Salli1991}, we have
\begin{equation} \label{eq:salli}
\Haus^1(C') \leq c(\alpha) \abs{b-a} \delta^{1-s},
\end{equation}
where $s = \frac{\log 2}{\log \pa{\frac{2-\alpha}{1-\alpha}}} \in (0,1)$ and $c(\alpha)$ is a positive constant depending on $\alpha$. Observe that $\bigcup_{i} J_i \subset C'$ and, for every interval $I_i$, $\abs{I_i \setminus C'} \leq \abs{I_i} - \frac{\delta}{2}\abs{b-a}.$ Thus, using \eqref{eq:salli}, we have
\[ \abs{b-a} = \sum_{i = 1}^n \abs{I_i} + \sum_{i = 1}^\infty \abs{J_i} = \sum_{i = 1}^n \abs{I_i \setminus C'} + \Haus^1(C') \leq \abs{b-a} - \frac{1}{2}n\delta\abs{b-a} + c(\alpha) \abs{b-a} \delta^{1-s}, \]
yielding $n \leq 2c(\alpha) \delta^{-s}$.


Let us then estimate the integral along $\gamma_1$.
Without loss of generality we may assume that $x_1 < x$. Denote by $\{J_i\}_i$ the collection of open intervals constituting the connected components of $(x_1,x) \setminus C$. Let $k_0 \in \Z$ be so that 
$2^{-k_0}< \abs{x-x_1} \le 2^{-k_0+1}$. Then
\begin{align*}
\int_{\gamma_1} \dist(z,E)^{\frac1{1-p}} \,ds(z)
& \le \sum_{i}2\int_0^{\abs{J_i}}t^{\frac1{1-p}} \,dt
= 2\frac{p-1}{p-2}\sum_{i}\abs{J_i}^{\frac{p-2}{p-1}}\\
& \le c(p)\sum_{k = k_0}^\infty \# \gra{i \st 2^{-k-1} < \abs{J_i} \le 2^{-k}} 2^{-k\frac{p-2}{p-1}} \\
& \le c(p) \sum_{k = k_0}^\infty c_s 2^{(k-k_0)s} 2^{-k\frac{p-2}{p-1}} \\
& \leq c(p) \sum_{k = k_0}^\infty 
c_s 2^{(k-k_0)\pa{s-\frac{p-2}{p-1}}} \abs{x-x_1}^\frac{p-2}{p-1}\\
& \le c(p,s) \abs{x-x_1}^\frac{p-2}{p-1}
\le c(p,s) \abs{z_1-z_2}^\frac{p-2}{p-1}
\end{align*}
as long as $s < \frac{p-2}{p-1}$.

 The integral along $\gamma_3$ is handled analogously.
\end{proof}

We end this section by showing that the $p$-removability results that are proven via the curve condition \eqref{eq:curvecondition} give removability only for porous sets.

\begin{proposition}\label{prop:curvetoporous}
 Suppose that $E = C \times F \subset \R^2$ is a compact set satisfying the curve condition \eqref{eq:curvecondition} and that $F\subset \R$ is a totally disconnected set with positive Lebesgue measure. Then $C$ is uniformly lower $\alpha$-porous for some $\alpha >0$.
\end{proposition}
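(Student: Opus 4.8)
The plan is to argue by contradiction: assume $C$ is not uniformly lower $\alpha$-porous for any $\alpha > 0$, and produce a pair of points whose connecting curves all violate the curve condition \eqref{eq:curvecondition}. Failure of uniform lower porosity means that for every $n \in \N$ there is a point $x_n \in C$ and a scale $r_n > 0$ such that $B(x_n, r_n) \setminus C$ contains no interval of length $2 r_n / n$; equivalently, $C$ is "$1/n$-dense" in $B(x_n,r_n)$ in the sense that every subinterval of $B(x_n,r_n)$ of length $\geq 2r_n/n$ meets $C$. After rescaling and translating (the curve condition is invariant under these operations up to constants, since $\dist(\cdot, E)$ scales linearly and $|z_1 - z_2|$ scales linearly, and the exponents $\tfrac{1}{1-p}$ and $\tfrac{p-2}{p-1}$ on the two sides match — this is precisely the scale-invariant character of \eqref{eq:curvecondition}), we may assume $r_n = 1$ and $x_n = 0$, so we have a sequence of sets, all pieces of $C$, that are $1/n$-dense in $B(0,1)$.

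The key idea is then to choose $z_1, z_2$ on opposite sides of such a near-full piece of $C$ and exploit the positive measure of $F$ in the vertical direction. Concretely, since $F$ has positive Lebesgue measure, pick a density point $y_0 \in F$; then for small $\rho$, $F \cap (y_0 - \rho, y_0 + \rho)$ has measure close to $2\rho$, so $E \cap \big(B(0,1) \times (y_0-\rho, y_0+\rho)\big)$ is a product of an almost-full (in the $1/n$-dense sense) horizontal set with an almost-full vertical set. Take $z_1 = (-\tfrac12, y_1)$ and $z_2 = (\tfrac12, y_2)$ with $y_1, y_2$ near $y_0$ but outside $F$ (possible since $F$ is totally disconnected, hence has dense complement). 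Any curve $\gamma$ from $z_1$ to $z_2$ must cross the vertical strip $\{|x| \le \tfrac12\}$; I will lower-bound $\int_\gamma \dist(z,E)^{1/(1-p)}\,ds$ by restricting attention to the portion of $\gamma$ inside this strip and using that $\dist(z,E)$ is small there. The point is a covering/Fubini argument: the curve, projected to the $x$-axis, must sweep out an interval of length $\geq 1$; on most of this sweep the point of $\gamma$ is forced to be within distance $\sim 1/n$ of $C$ horizontally, and whenever the $y$-coordinate lies in the (large-measure) set $F \cap (y_0 - \rho, y_0+\rho)$ the distance to $E$ is $\lesssim 1/n$. Either $\gamma$ spends a definite fraction of its horizontal sweep at heights inside $F$ — in which case the integrand is $\gtrsim n^{1/(p-1)}$ over a set of $x$-length $\gtrsim 1$, forcing $\int_\gamma \dist(z,E)^{1/(1-p)}\,ds \gtrsim n^{1/(p-1)}$ — or $\gamma$ must make a large vertical excursion to escape the slab $(y_0-\rho,y_0+\rho)$, which (since $\rho$ can be taken independent of $n$) only adds length and does not decrease the integral. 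Letting $n \to \infty$ with $|z_1 - z_2| \le 2$ fixed contradicts \eqref{eq:curvecondition}.

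The main obstacle is making the dichotomy in the previous paragraph quantitatively clean. The subtlety is that a clever curve could try to thread between the points of $C$ (moving horizontally only at heights in the complement of $F$) and only cross $F$-heights while moving vertically through gaps of $C$; one must show this cannot be done cheaply. The resolution is that the horizontal gaps of $C$ available in $B(0,1)$ all have length $< 2/n$, so any horizontal progress made at an "$F$-free" height still happens within distance $2/n$ of $C$ in the $x$-direction and hence, since we are inside the slab where $F$ is almost full, within distance $O(1/n + \operatorname{dist}(y, F))$ of $E$ — and the total $y$-measure where $\operatorname{dist}(y,F)$ exceeds $c/n$ is small (by the density-point choice and an elementary estimate on $\{y : \operatorname{dist}(y,F) > t\} \cap (y_0-\rho,y_0+\rho)$, which has measure $O(t) + $ the small deficit of $F$). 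So on all but an $O(1/n)$-fraction of the horizontal sweep the integrand is $\gtrsim n^{1/(p-1)}$, giving the bound. A second, more routine obstacle is verifying the scale-invariance reduction carefully — checking that replacing $E$ by a rescaled-and-translated piece preserves the curve condition with the same constant $c_\Gamma$ — but this is immediate from homogeneity of both sides of \eqref{eq:curvecondition} under the map $z \mapsto \lambda z + v$.
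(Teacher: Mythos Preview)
Your contradiction strategy is essentially the contrapositive of the paper's direct argument, and the underlying mechanism (density of $F$ near a Lebesgue point combined with $1/n$-density of $C$ forces $\dist(\cdot,E)$ to be small on any crossing curve) is the right one. But your execution is considerably more involved than necessary, and the ``escape'' dichotomy you set up has a gap as written.

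The issue is the claim that a vertical excursion out of the slab ``only adds length and does not decrease the integral.'' Adding arclength in a region where $\dist(\cdot,E)$ is large contributes nothing to the integral, so this sentence is not an argument. Your later refinement controls $\dist(z,E)\le 2/n+\dist(z_y,F)$ for $z$ in the box, but pointwise $\dist(z_y,F)$ inside the slab is only bounded by the size of the largest $F$-gap there, i.e.\ by $\varepsilon\rho$ with $\varepsilon$ the density deficit --- \emph{not} by $c/n$. Thus your lower bound on the integral is $\gtrsim(2/n+\varepsilon\rho)^{1/(p-1)}$, which does not tend to infinity with $n$ unless you simultaneously drive $\varepsilon\to 0$. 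That in turn requires the original scale $\rho r_n\to 0$, which you never arrange; you treat $\rho$ as fixed ``independent of $n$'' without tracking how $\varepsilon$ depends on $n$ through the rescaling. This is fixable (one shows $r_n$ is bounded by compactness of $C$, and then lets $\rho=\rho_n\to0$ slowly), but you do not do it, and the bookkeeping becomes delicate.

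The paper sidesteps all of this with a one-line observation. Rather than arguing by contradiction, fix $x\in C$, $r$ small, pick $z_1,z_2$ on opposite sides of $x$ at scale $r$ near a density point $y$ of $F$, and let $\gamma$ be the curve given by the hypothesis. Set
\[
d=\max\bigl\{\dist(z,E)\ :\ z\in\gamma\cap A\bigr\}
\]
for a box $A$ of size $\sim r$ around $(x,y)$. Since $\gamma\cap A$ has length $\gtrsim r$ and the integrand is $\ge d^{1/(1-p)}$ there, the curve condition forces $d\gtrsim c_\Gamma^{1-p}r$. So some point $z^*\in\gamma\cap A$ satisfies $\dist(z^*,E)\gtrsim r$; since $\dist(z^*,E)^2=\dist(z^*_x,C)^2+\dist(z^*_y,F)^2$ and the density-point estimate makes $\dist(z^*_y,F)$ small, one concludes $\dist(z^*_x,C)\gtrsim r$, which is exactly the porosity hole. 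No Fubini, no escape dichotomy, no sequence $n\to\infty$.
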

\begin{proof}
Let $c_\Gamma > 0$ be the constant in \eqref{eq:curvecondition}.
Let $y \in F$ be a Lebesgue density-point of $F$ and $\varepsilon \eqdef \sqrt{2}c_\Gamma^{1-p}$. Then there exists $r_0>0$ such that 
for all $0 < r < r_0$ we have
\begin{equation}\label{eq:Fdensity}
 \Haus^1(B(y,r)\setminus F) < \varepsilon r.
\end{equation}
Let $x \in C$ and $0 < r < r_0$. Define
$\tilde z_1 = (x-r/2,y)$ and $\tilde z_1 = (x+r/2,y)$, and select points $z_1 \in B(\tilde z_1,r/4) \setminus E$ and $z_2 \in B(\tilde z_2,r/4) \setminus E$. Let $\gamma \subset \R^2 \setminus E$ be a curve connecting $z_1$ to $z_2$ and satisfying \eqref{eq:curvecondition}.
Define $A \eqdef \overline{B(x,7r/8)} \times \overline{B(y,r/2)}$
and $d \eqdef \max \gra{ \dist(z,E) \st z \in \gamma \cap A}$. Now, by \eqref{eq:curvecondition}
\[
d^\frac{1}{1-p}\frac{r}{2} \le \int_{\gamma \cap A}  \dist(z,E)^{\frac{1}{1-p}} \,ds(z) \le c_{\Gamma} \abs{z_1-z_2}^{\frac{p-2}{p-1}} \le c_{\Gamma} (2r)^{\frac{p-2}{p-1}}.
\]
Thus,
\[
d \ge 2c_\Gamma^{1-p}r = \sqrt{2}\varepsilon r,
\]
which together with \eqref{eq:Fdensity} gives the $\varepsilon$-porosity of $C$ at $x$ at the scale $r$. From the compactness of $C$ it then follows that $C$ is uniformly lower $\alpha$-porous for some $\alpha>0$.
\end{proof}

\bibliographystyle{plain}
\bibliography{biblio.bib}

\end{document}